\begin{document}
\newtheorem{teorema}{Theorem}
\newtheorem{lemma}{Lemma}
\newtheorem{utv}{Proposition}
\newtheorem{svoistvo}{Property}
\newtheorem{sled}{Corollary}
\newtheorem{con}{Conjecture}
\newtheorem{zam}{Remark}
\newtheorem{quest}{Question}

\author{V. N. Potapov\thanks{\texttt{quasigroup349@gmail.com}}, A. A. Taranenko\thanks{Sobolev Institute of Mathematics, Novosibirsk, Russia; \texttt{taa@math.nsc.ru}}}
\title{Asymptotic bounds on the numbers of vertices of polytopes of polystochastic matrices}
\date{June 20, 2024}

\maketitle

\begin{abstract}
A multidimensional nonnegative matrix is called polystochastic if the sum of entries in each  line is equal to $1$. The set of all polystochastic matrices of order $n$ and dimension $d$ is a convex polytope $\Omega_n^d$. 

In the present paper, we compare known bounds on the number of vertices of the polytope $\Omega_n^d$ and prove that the number of vertices of  $\Omega_3^d$ is doubly exponential on $d$.

\textbf{Keywords:}  polystochastic matrix; Birkhoff polytope; vertices of a polytope; asymptotic bound;  multidimensional permutation.

\textbf{MSC2020:} 05A05, 15B51, 52B05
\end{abstract}

\section{Introduction and definitions}

Polystochastic matrices are a natural extension of doubly stochastic matrices to greater dimensions.  The properties of the convex polytope of doubly stochastic matrices were extensively studied by Brualdi and Gibson in the 1970s~\cite{BruGib.doublypolyhed, BruGib.doublypolyII, BruGib.doublypolyIII}, while there are still few results on  the  more  complicated  polytope of polystochastic matrices.  

 Knowledge  of the vertex set of a convex polytope allows one  to reveal its geometrical structure and simplify the solution of some optimization problems.  The Birkhoff theorem states that the vertices  of the polytope of doubly stochastic matrices  are the permutation matrices only, while the polytope of polystochastic matrices has many other vertices for which we do not have  good descriptions nor exact bounds on their number.   The aim of the present paper is to estimate  the number of vertices of the polytope of polystochastic matrices.

Let us give the necessary definitions. 
A \textit{$d$-dimensional matrix $A$ of order $n$} is an array $(a_\alpha)_{\alpha \in I^d_n}$, $a_\alpha \in\mathbb R$, whose entries are indexed by $\alpha$ from the index set $I_n^d = \{  \alpha = (\alpha_1, \ldots, \alpha_d ) | \alpha_i \in \{ 1, \ldots, n\}\}$.  A matrix $A$ is called \textit{nonnegative} if all $a_{\alpha} \geq 0$, and it is a \textit{$(0,1)$-matrix} if all its entries are $0$ or $1$. The \textit{support} $supp(A)$ of a matrix $A$ is the set of all indices $\alpha$ for which $a_{\alpha} \neq 0$. 

Given $k\in \left\{0,\ldots,d\right\}$, a \textit{$k$-dimensional plane} in $A$ is the submatrix  obtained by fixing $d-k$ positions in indices and letting  the values in other $k$ positions vary from $1$ to $n$. We will say that the set of fixed positions defines the direction of a plane.   A $1$-dimensional plane is said to be a \textit{line}. Matrices $A$ and $B$ are called \textit{equivalent} if one can be obtained from the other by transposes (permutations of components of indices) and permutations of  parallel $(d-1)$-dimensional planes.

A multidimensional nonnegative matrix $A$ is called \textit{polystochastic} if the sum of its entries at each line is equal to $1$.  Polystochastic matrices of dimension $2$ are known as \textit{doubly stochastic}.  Since doubly stochastic $(0,1)$-matrices are exactly the permutation matrices,   for  $d \geq 3$  we  will say that $d$-dimensional polystochastic $(0,1)$-matrices are \textit{$d$-dimensional (or multidimensional) permutations}. There is a one-to-one correspondence between $d$-dimensional permutations of order $n$ and $(d-1)$-dimensional latin hypercubes of order $n$ that are  $(d-1)$-dimensional matrices of order $n$ filled by $n$ symbols so that each line contains exactly one symbol of each type (for more details, see, for example,~\cite{JurRys.stochmatr}).

It is easy to see that the set of $d$-dimensional polystochastic matrices of order $n$ is a convex polytope that we denote as $\Omega_n^d$ and call the \textit{Birkhoff polytope}.   Under a \textit{dimension} of  $\Omega_n^d$ we mean its geometric dimension as a polytope in $\mathbb{R}^{n^d}$, and \textit{facets} are its faces of one less dimension than the polytope itself.

A matrix $A \in \Omega_n^d$ is a \textit{vertex} of the Birkhoff polytope $\Omega_n^d$ if there are no matrices $B_1, B_2 \in \Omega_n^d$ such that $A = \lambda B_1 + (1 - \lambda) B_2$ for some $ 0 < \lambda <1$.  The definition also implies that for every $d$-dimensional  polystochastic matrix $A$ of order $n$ there is a decomposition of the form $A = \sum\limits_{i} \lambda_i B_i$, where $\lambda_i > 0$, $\sum\limits_i \lambda_i = 1$, and $B_i$ are  vertices of $\Omega_n^d$ such that $supp(B_i) \subseteq A$.  Let $V(n,d)$ denote the number of vertices of the polytope $\Omega_n^d$. Note that every multidimensional permutation is a vertex in $\Omega_n^d$. 
 
At last, we will say that a multidimensional matrix  is a \textit{zero-sum} matrix if the sum of entries at each line of $A$ is equal to $0$. For example, the difference between two polystochastic matrices of the same order and dimension is a zero-sum matrix. 

The structure of the paper is as follows. In Section~\ref{boundsec}, using a general bound on the number of faces in polytopes,  we get an upper bound on the number of vertices of   the polytope  of polystochastic matrices. Then we summarize other known bounds on the number of vertices of $\Omega_n^d$  and analyze their asymptotic  behavior  when the order $n$ or the dimension $d$ of matrices is fixed.  In particular, we see that for the number $V(3,d)$ of the polytope of $d$-dimensional matrices of order $3$, the lower and upper bounds differ dramatically. To narrow this gap, in Section~\ref{constrsec} we propose a  construction of vertices of $\Omega_3^d$ that shows that $V(3,d)$ grows doubly exponentially.

\section{Bounds on the number of vertices of $\Omega_n^d$} \label{boundsec}

We start with an upper bound on the vertices in a general polytope.  The well-known result of McMullen~\cite{McMullen.facesofpolytope} states that cyclic polytopes have the largest possible number of faces among all convex polytopes with a given dimension and number of vertices.  As a consequence, one can estimate the number of vertices of a polytope with a given dimension and number of facets.

\begin{utv}[see, e.g., \cite{Bron.covexpoly}] \label{McMull} 
The number of vertices $V$ of a convex $m$-dimensional polytope with $k$ facets, $k \geq m$, is
$$V \leq {k - \lfloor \frac{m+1}{2} \rfloor \choose k - m} +  {k - \lfloor \frac{m+2}{2} \rfloor \choose k - m}. $$
\end{utv}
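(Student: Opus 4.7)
The plan is to reduce the statement to McMullen's Upper Bound Theorem by passing to the polar dual polytope and then computing the face numbers of a cyclic polytope.

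First, given an $m$-dimensional polytope $P$ with $V$ vertices and $k$ facets, translate $P$ so that the origin lies in its interior and form the polar dual $P^*$. Polar duality induces an inclusion-reversing bijection between the faces of $P$ and those of $P^*$; in particular, facets of $P$ correspond to vertices of $P^*$ and vertices of $P$ correspond to facets of $P^*$. Hence $P^*$ is itself an $m$-dimensional polytope with exactly $k$ vertices, and the number we wish to bound equals $V = f_{m-1}(P^*)$, the number of facets of $P^*$.

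Next, apply McMullen's Upper Bound Theorem (the main result of \cite{McMullen.facesofpolytope}) to $P^*$: among all $m$-dimensional convex polytopes with a prescribed number of vertices, the cyclic polytope $C(m,k)$ simultaneously maximizes the number of $i$-faces for every $i$. Taking $i = m-1$ yields
$$V = f_{m-1}(P^*) \leq f_{m-1}(C(m,k)).$$

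Finally, one needs a closed-form expression for $f_{m-1}(C(m,k))$. This is the classical computation based on the Gale evenness condition, which identifies the facets of $C(m,k)$ with those $m$-element subsets of the $k$ generating points along the moment curve that satisfy a specific parity property on the gaps. Enumerating such subsets, with separate treatment of the parities of $m$, gives exactly
$${k - \lfloor \tfrac{m+1}{2} \rfloor \choose k - m} + {k - \lfloor \tfrac{m+2}{2} \rfloor \choose k - m}.$$

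The main technical obstacle is the last step: matching the Gale-evenness count to the stated binomial sum for both parities of $m$ requires careful bookkeeping with the floor functions, while the polar duality and the appeal to the Upper Bound Theorem are standard one-line steps. Since the explicit face count of cyclic polytopes is textbook material, one can alternatively just cite \cite{Bron.covexpoly} for the final formula, which is the route the present paper takes.
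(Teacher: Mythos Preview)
Your outline is correct and is indeed the standard derivation of this bound: pass to the polar dual, apply McMullen's Upper Bound Theorem to the dual polytope with $k$ vertices, and plug in the explicit facet count of the cyclic polytope $C(m,k)$.

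There is nothing to compare against, however, since the paper gives no proof of this proposition at all. It is stated as a known result with a reference (``see, e.g., \cite{Bron.covexpoly}'') and used as a black box. You already anticipated this in your final sentence, and that is exactly what happens: the paper simply cites the bound and moves on to apply it to $\Omega_n^d$.
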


The polytope of $d$-dimensional polystochastic matrices of order $2$ has dimension $1$  and  only two vertices, i.e., the  multidimensional permutations (see, for example,~\cite{my.obzor}).  But for $n \geq 3$, the polytope $\Omega_n^d$ is nontrivial.

\begin{utv} \label{polytopedim}
Let $n \geq 3$. The polytope $\Omega_n^d$ is a $(n-1)^d$-dimensional polytope  with $n^d$ facets. 
\end{utv}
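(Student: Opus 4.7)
The plan is to split the claim into two parts: computing the dimension of $\Omega_n^d$, and identifying its $n^d$ facets with the nonnegativity constraints.

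For the dimension, I would first argue $\dim \Omega_n^d = \dim Z_n^d$, where $Z_n^d$ denotes the vector space of $d$-dimensional zero-sum matrices of order $n$. Indeed, the uniform matrix $U \in \Omega_n^d$ with all entries equal to $n^{1-d}$ is strictly positive, so $U + \varepsilon Z \in \Omega_n^d$ for every $Z \in Z_n^d$ and all sufficiently small $\varepsilon > 0$; hence $\mathrm{aff}(\Omega_n^d) = U + Z_n^d$. To compute $\dim Z_n^d$, let $V = \{v \in \mathbb{R}^n : \sum_i v_i = 0\}$, a subspace of dimension $n-1$. The condition that all line sums in direction $i$ vanish is equivalent to membership in the subspace $\mathbb{R}^n \otimes \cdots \otimes V \otimes \cdots \otimes \mathbb{R}^n$ with $V$ in the $i$-th tensor factor, and intersecting these subspaces over $i = 1, \ldots, d$ yields $Z_n^d = V^{\otimes d}$, so $\dim Z_n^d = (n-1)^d$.

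For the facets, the only candidates are the faces $F_\alpha := \{A \in \Omega_n^d : a_\alpha = 0\}$ for $\alpha \in I_n^d$, since only the nonnegativity inequalities appear in the description of $\Omega_n^d$. I would show each $F_\alpha$ is a facet of dimension $(n-1)^d - 1$ and that distinct indices produce distinct facets. Both claims follow at once if, for each $\alpha$, I can exhibit a matrix $A^{(\alpha)} \in \Omega_n^d$ with $a^{(\alpha)}_\alpha = 0$ and $a^{(\alpha)}_\beta > 0$ for every $\beta \neq \alpha$: such a point lies in the relative interior of the affine slice $\mathrm{aff}(\Omega_n^d) \cap \{a_\alpha = 0\}$, which forces $\dim F_\alpha = (n-1)^d - 1$, and it simultaneously witnesses $A^{(\alpha)} \notin F_\beta$ for every $\beta \neq \alpha$, giving distinctness.

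The main obstacle is thus producing $A^{(\alpha)}$, and this is precisely where the hypothesis $n \geq 3$ enters. I would set $a^{(\alpha)}_\beta := w_{h(\alpha,\beta)}$, where $h(\alpha,\beta)$ counts the coordinates in which $\alpha$ and $\beta$ agree, and the numbers $w_0, \ldots, w_d$ are chosen so that every line sum equals $1$. Since each line meets $\alpha$ in at most one cell, this forces $w_d = 0$ together with the recurrence $w_{k+1} + (n-1)\, w_k = 1$ for $0 \leq k \leq d-1$. A short downward induction from $w_d = 0$ then shows $w_k > 0$ for all $0 \leq k \leq d-1$ whenever $n \geq 3$, yielding the required strictly positive entries at every index other than $\alpha$. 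The construction fails for $n = 2$ (the sequence $w_k$ then alternates between $0$ and $1$), consistent with the text's observation that $\Omega_2^d$ collapses to a one-dimensional polytope.
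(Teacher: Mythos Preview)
Your proof is correct and somewhat more detailed than the paper's. For the dimension, the paper simply notes that a zero-sum matrix is uniquely determined by the entries in any order-$(n-1)$ submatrix, whereas you reach the same count via the tensor identification $Z_n^d = V^{\otimes d}$; both arguments are standard and yield $(n-1)^d$ immediately. For the facets, the paper just asserts, by analogy with the doubly stochastic case, that the faces $\{A : a_\alpha = 0\}$ are the facets and that there are $n^d$ of them, without verifying that each such face is full-dimensional in the boundary or that distinct $\alpha$ give distinct facets. Your explicit construction of a matrix $A^{(\alpha)}$ vanishing only at $\alpha$ (via the Hamming-weight profile $w_k$ satisfying $w_{k+1}+(n-1)w_k=1$, $w_d=0$) fills exactly this gap and is where the hypothesis $n\geq 3$ is genuinely used, so your treatment of the facet count is more complete than the paper's.
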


\begin{proof}
Similar to the polytope $\Omega^2_n$ of doubly stochastic matrices (see, e.g.,~\cite{BruGib.doublypolyhed}), every face $F$ of the polytope $\Omega_n^d$ is defined by a set of indices in which a matrix $A$ from $F$ takes zero values. So the facets of $\Omega_n^d$ are $\{ A \in  \Omega_n^d| a_{\alpha} = 0 \}$ for some  $\alpha \in I_n^d $, and  there are $n^d$ facets in $\Omega_n^d$. 

To find  the dimension of $\Omega_n^d$, it is sufficient to note that the space of $d$-dimensional zero-sum matrices of order $n$ has dimension  $(n-1)^d$ because every such matrix is  uniquely defined by  values in any $d$-dimensional submatrix of order $n-1$. 
\end{proof}

From Propositions~\ref{McMull} and~\ref{polytopedim}, we get the following upper bound on the number of vertices of $\Omega_n^d$. For $d=3$ it was previously stated in \cite{Li2Zhang.vertstoch}. 

\begin{teorema} \label{polytopeupperbound}
For the number of vertices $V(n,d)$ of the polytope of polystochastic matrices $\Omega_n^d$ we have
$$V(n,d) \leq {n^d - \lfloor \frac{(n-1)^d+1}{2} \rfloor \choose n^d - (n-1)^d} +  {n^d - \lfloor \frac{(n-1)^d+2}{2} \rfloor \choose n^d - (n-1)^d}. $$
\end{teorema}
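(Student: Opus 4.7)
The plan is to derive Theorem~\ref{polytopeupperbound} as an immediate corollary of the two propositions just established. By Proposition~\ref{polytopedim}, for $n \geq 3$ the polytope $\Omega_n^d$ has geometric dimension $m = (n-1)^d$ and exactly $k = n^d$ facets, so all that remains is to substitute these two numerical parameters into the general upper bound of Proposition~\ref{McMull} and to rewrite the resulting binomial coefficients in the form stated.

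Before invoking Proposition~\ref{McMull}, I would check its hypothesis $k \geq m$. Here this reduces to $n^d \geq (n-1)^d$, which holds trivially because $n > n-1 \geq 1$ for $n \geq 3$. Once the hypothesis is in place, the bound $V \leq \binom{k - \lfloor (m+1)/2 \rfloor}{k-m} + \binom{k - \lfloor (m+2)/2 \rfloor}{k-m}$ with $m = (n-1)^d$ and $k = n^d$ produces the expression in the theorem verbatim; no further algebraic manipulation is needed.

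I do not foresee any genuine obstacle here: the analytic content sits entirely in the McMullen upper bound theorem (Proposition~\ref{McMull}) and in the computation of the dimension and facet count of $\Omega_n^d$ (Proposition~\ref{polytopedim}). The proof of Theorem~\ref{polytopeupperbound} itself is therefore a single line of substitution, with the mild sanity check $n^d \geq (n-1)^d$ being the only accompanying verification.
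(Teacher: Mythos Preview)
Your proposal is correct and matches the paper's own argument exactly: the paper states the theorem as an immediate consequence of Propositions~\ref{McMull} and~\ref{polytopedim}, which is precisely the substitution $m=(n-1)^d$, $k=n^d$ you describe. Your extra sanity check $k\ge m$ is fine and costs nothing.
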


To our knowledge, there are no upper bounds on the number of vertices of $\Omega_n^d$ that use the specific properties of this polytope. So finding any improvement to Theorem~\ref{polytopeupperbound} is an interesting question. 
 
A natural lower bound on  the number $V(n,d)$  of vertices  of $\Omega_n^d$ is  the number of multidimensional permutations because every $d$-dimensional permutation of order $n$ is a vertex of $\Omega_n^d$.

Let us study the asymptotics of the number of vertices $V(n,d)$ when $d$ is fixed and $n \rightarrow \infty$. 

When $d = 2$, the well-known Birkhoff theorem states that every vertex of the polytope of doubly stochastic matrices is a permutation matrix.  So $V(n,2) = n!$ that is the number of permutation matrices of order $n$.

In~\cite{Keevash.existdesII} Keevash found the lower bound on  the number of multidimensional permutations of fixed dimension, which with the upper bound by Linial and Luria~\cite{LinLur.hdimper} gives the following.

\begin{teorema}[\cite{Keevash.existdesII}, \cite{LinLur.hdimper}] \label{permutasym}
The number of $d$-dimensional permutations of order $n$ is 
$\left( \frac{n}{e^{d-1}}  + o(n)\right)^{n^{d-1}} $
as $d \geq 2$ is fixed and $n \rightarrow \infty$. 
\end{teorema}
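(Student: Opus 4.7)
The plan is to establish matching upper and lower bounds, both of order $(n/e^{d-1})^{n^{d-1}}$ up to a factor of $(1+o(1))^{n^{d-1}}$. At $d=2$ the count is exactly $n!$, so Stirling's formula $n! = (n/e)^n \cdot e^{O(\log n)}$ settles the base case; the cases $d \geq 3$ require separate treatments for the upper and lower bounds.

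For the upper bound I would follow the approach of Linial and Luria. Slice a $d$-dimensional permutation $A$ along its last coordinate into $n$ parallel $(d-1)$-dimensional layers $A_1, \ldots, A_n$; each layer is itself a $(d-1)$-dimensional permutation, the layers have pairwise disjoint supports, and together they tile $I_n^{d-1}$. The number of ordered sequences of layers is controlled by the Br\'egman--Minc inequality $\mathrm{perm}(M) \leq \prod_i (r_i!)^{1/r_i}$ applied to the $(0,1)$-matrix recording which positions remain admissible after the first $k-1$ layers have been fixed. Summing the resulting log-bounds over the $n$ slicing steps and inducting on $d$ produces a factor of $e^{-(d-1)}$ per entry, matching the claim.

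For the lower bound I would invoke Keevash's counting theorem for combinatorial designs. A $d$-dimensional permutation of order $n$ is equivalent to a perfect matching in the $d$-partite $d$-uniform hypergraph whose vertices are the lines of $I_n^d$ (partitioned by direction) and whose edges are the positions $\alpha \in I_n^d$, each viewed as the set of $d$ lines through $\alpha$. This hypergraph is regular and satisfies Keevash's pseudorandomness conditions, so his random-greedy (nibble plus absorption) construction extends almost every sparse partial permutation to a complete one. The product of the choice probabilities he controls at each step yields the matching lower bound $(n/e^{d-1})^{n^{d-1}(1-o(1))}$.

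The main obstacle is the lower bound. The upper bound is a mostly mechanical induction from Br\'egman--Minc, whereas pinning down the correct constant $e^{-(d-1)}$ in the base from below requires Keevash's delicate probabilistic machinery for designs together with his sharp accounting of the loss incurred in each nibble step; without that machinery, only crude exponential lower bounds of the form $c^{n^{d-1}}$ with $c < n/e^{d-1}$ are available.
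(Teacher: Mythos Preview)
The paper does not prove this statement at all: Theorem~\ref{permutasym} is simply quoted from the literature, with the upper bound attributed to Linial and Luria and the lower bound to Keevash, and no argument is given in the paper beyond that one-sentence attribution. So there is no ``paper's own proof'' to compare your proposal against.

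That said, your sketch is broadly faithful to what happens in the cited sources. The Linial--Luria upper bound is indeed an entropy/Br\'egman-type argument obtained by peeling off layers, and the Keevash lower bound does come from his counting machinery for designs via a nibble-plus-absorption analysis. Your description of a $d$-dimensional permutation as a perfect matching in a $d$-partite $d$-uniform hypergraph on lines is a correct and standard encoding. One small caution: the Linial--Luria paper phrases the argument in terms of entropy rather than as a literal application of the Br\'egman--Minc permanent inequality, so if you were to write this out in full you would want to follow their entropy bookkeeping rather than try to force a direct permanent bound at each step. But as a high-level plan your proposal matches the cited proofs, which is all the paper itself relies on.
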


For $d \geq 3$ and $n \geq 3$, the polytope $\Omega_n^d$ has vertices other than multidimensional permutations,  but we know not many examples  and very few  constructions of such vertices. Most of these constructions~\cite{CuiLiNg.Birkfortensor, LinLur.birvert, FichSwart.3dimstoch} produce vertices of $\Omega_n^d$ that have exactly two $\nicefrac{1}{2}$-entries in each line. The only improvement on the lower bound of vertices of the polytope of $\Omega_n^d$ of fixed dimension was obtained for $d = 3$ by Linial and Luria in~\cite{LinLur.birvert}.

\begin{teorema}[\cite{LinLur.birvert}] \label{d3lowangles}
If $M(n,3)$ is the number of $3$-dimensional permutations of order $n$, then for the number $V(n,3)$ of vertices of $\Omega_n^d$ we have
$$V(n,3) \geq M(n,3)^{3/2 - o(1) } \mbox{ as } n \rightarrow \infty.$$
\end{teorema}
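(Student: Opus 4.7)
My plan is to construct a large family of vertices of $\Omega_n^3$ with entries in $\{0,\nicefrac{1}{2},1\}$. Any such matrix $A$ is determined by a pair $(F,H)$: $F=\{\alpha : A_\alpha=1\}$ is a partial $3$-dimensional permutation (at most one cell per line), and $H=\{\alpha : A_\alpha=\nicefrac{1}{2}\}$ contains exactly two cells in each line not covered by $F$ and no cells in any line covered by $F$. Any such $A$ is automatically polystochastic; the real question is which pairs $(F,H)$ make $A$ extremal.

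First I would apply the standard face-characterization: $A$ is a vertex of $\Omega_n^3$ iff the only zero-sum matrix $Z$ with $supp(Z)\subseteq supp(A)$ is $Z=0$. The line constraints force such $Z$ to vanish on $F$ and to take opposite values on the two cells of each $H$-line, so $A$ is a vertex exactly when every connected component of the \emph{line-adjacency graph} on $H$ (in which two cells of $H$ are joined when they share a line) is non-bipartite. In dimension $2$ every such graph is automatically bipartite, which is why Birkhoff's theorem holds; but a parity analysis in dimension $3$ shows that odd closed walks of length $\geq 7$ in the line-adjacency graph can exist, and this is exactly what opens the door to $\nicefrac{1}{2}$-fractional vertices.

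Next I would count by building the pairs $(F,H)$ from two $3$-dimensional permutations. Roughly, for a pair $P_1,P_2$ of $3$-dimensional permutations whose disagreement region has been ``twisted'' to include an odd closed walk, I take $F=P_1\cap P_2$ and let $H$ be the modified symmetric difference of $P_1$ and $P_2$, arranged to carry an odd cycle in its line-adjacency graph. Using the tight count of $3$-dimensional permutations from Theorem~\ref{permutasym} together with a Br\'egman--Schrijver-type entropy bound for Latin square completions, one shows that a typical $P_1$ admits at least $M(n,3)^{1/2-o(1)}$ valid companions $P_2$. Summing over $P_1$ and dividing by the $O(1)$-fold overcount (each resulting vertex determines $(F,H)$ and hence the unordered pair up to bounded ambiguity) gives at least $M(n,3)^{3/2-o(1)}$ distinct vertices.

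The main obstacle is the counting step: one must show simultaneously that the number of companions $P_2$ is large (entropy estimate) and that a $1-o(1)$ fraction of them produce a non-bipartite line-adjacency graph on $H$ (probabilistic/geometric estimate). Both must go through without losing more than $o(1)$ in the exponent, which is the delicate part. The entropy side follows the template of Br\'egman and Keevash--Linial--Luria, while the non-bipartiteness condition requires either an explicit local twist that forces an odd cycle on $H$, or a second-moment argument showing that a random $H$-structure over a typical $P_1$ has a non-bipartite component with high probability.
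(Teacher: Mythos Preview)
The paper does not prove this theorem; it is quoted from \cite{LinLur.birvert} and used only as input to the asymptotic comparison that follows. So there is no in-paper argument to match, but your outline does track the broad shape of the original Linial--Luria proof: construct many $\{0,\nicefrac12,1\}$-valued polystochastic matrices and certify extremality through the non-bipartiteness criterion on the line-adjacency graph of the $\nicefrac12$-support.

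The concrete construction you propose, however, cannot produce a single vertex. If $F=P_1\cap P_2$ and $H=P_1\bigtriangleup P_2$ for two $3$-dimensional permutations, then the line-adjacency graph on $H$ is \emph{always} bipartite: colour a cell of $H$ according to whether it lies in $P_1\setminus P_2$ or in $P_2\setminus P_1$; since each line meets each $P_i$ in exactly one cell, every edge of the graph joins the two colour classes. Consequently $\mathbf{1}_F+\tfrac12\mathbf{1}_H=\tfrac12(P_1+P_2)$ is never a vertex of $\Omega_n^3$ unless $P_1=P_2$. Your qualifiers ``twisted'' and ``modified symmetric difference'' gesture at a fix, but the fix is the whole proof: in Linial and Luria's argument the $\nicefrac12$-support is by design \emph{not} the symmetric difference of two permutations, and engineering a line-regular $H$ with all components non-bipartite --- together with the entropy count of such $H$ --- is exactly the content you have left unspecified. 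For the same reason your ``$O(1)$-fold overcount'' cannot stand as written: a vertex with every component of $H$ non-bipartite arises from \emph{no} pair $(P_1,P_2)$ via your recipe, so the map from pairs of permutations to vertices is empty rather than boundedly many-to-one.
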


Summarizing  these results, we deduce the following asymptotic bounds for the logarithm of $V(n,d)$ when $d$ is fixed.
\begin{utv}
If $d \geq 4$ is fixed, then for the number $V(n,d)$ of vertices of $\Omega_n^d$ we have
$$ n^{d-1} \ln n \cdot (1 + o(1)) \leq \ln  V(n,d) \leq  dn^{d-1} \ln n \cdot (1 + o(1)) $$
as $n \rightarrow \infty$.
In addition, if $d = 3$, then 
$$ \frac{3}{2} n^{2} \ln n  \cdot  (1 + o(1)) \leq \ln V(n,3) \leq  3 n^{2} \ln n \cdot (1 + o(1)), $$
and if $d = 2$, then $\ln V(n,2) = \ln  n! = n \ln n  \cdot(1 + o(1))$. 
\end{utv}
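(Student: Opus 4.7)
The plan is to assemble the bounds from Theorems~\ref{polytopeupperbound}, \ref{permutasym}, and \ref{d3lowangles}, and then take logarithms, with the only genuine analytic step being an asymptotic estimate of the binomial coefficients appearing in the upper bound.

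First, for $d = 2$ the Birkhoff theorem gives $V(n,2) = n!$, so Stirling's formula immediately yields $\ln V(n,2) = n \ln n \cdot (1 + o(1))$. For the lower bounds with $d \geq 3$, since every $d$-dimensional permutation is a vertex of $\Omega_n^d$, we have $V(n,d) \geq M(n,d)$. Theorem~\ref{permutasym} then gives
$$ \ln M(n,d) = n^{d-1} \bigl( \ln n - (d-1) + o(1) \bigr) = n^{d-1} \ln n \cdot (1 + o(1)), $$
which is the claimed lower bound for $d \geq 4$. For $d = 3$ I would combine this with Theorem~\ref{d3lowangles}, which states $V(n,3) \geq M(n,3)^{3/2 - o(1)}$; the factor $3/2$ transfers directly to the logarithm to give the $\tfrac{3}{2} n^{2} \ln n \cdot (1 + o(1))$ bound.

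For the upper bound I would apply Theorem~\ref{polytopeupperbound}. Set $N = n^d$ and $M = (n-1)^d$. Binomial expansion yields
$$ N - M = d n^{d-1} \bigl(1 + O(1/n)\bigr), \qquad N - \bigl\lfloor (M+j)/2 \bigr\rfloor = \frac{n^d}{2} \bigl(1 + O(1/n)\bigr) \quad (j = 1, 2). $$
Thus both binomial coefficients in Theorem~\ref{polytopeupperbound} have the form $\binom{a}{b}$ with $a \sim n^d/2$, $b \sim d n^{d-1}$, and in particular $b/a = o(1)$. Using the standard estimate $\ln \binom{a}{b} \leq b \ln(ea/b)$ together with $\ln(a/b) \sim \ln(n/(2d)) = \ln n \cdot (1 + o(1))$, I obtain
$$ \ln \binom{a}{b} \leq b \bigl(\ln(a/b) + 1\bigr) = d n^{d-1} \ln n \cdot (1 + o(1)). $$
Summing the two terms only multiplies by~$2$, which disappears into the $(1 + o(1))$ factor, giving the required upper bound.

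The main point where care is needed is the binomial estimate: one must check that the errors in the expansions of $N$, $M$, and $N - M/2$ really are swallowed by the $(1 + o(1))$ prefactor of $d n^{d-1} \ln n$, and that the bound $\ln\binom{a}{b} \le b\ln(ea/b)$ is tight enough — it is, because $b = o(a)$ makes the entropy-type correction negligible. Everything else is bookkeeping on the logarithms of the bounds supplied by the cited theorems.
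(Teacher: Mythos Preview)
Your argument is correct and matches the paper's proof essentially step for step: the same three cited results supply the lower bounds and the $d=2$ case, and the upper bound comes from Theorem~\ref{polytopeupperbound} together with the elementary binomial estimate $\binom{m}{k}\le m^k/k!$ (equivalently your $\ln\binom{a}{b}\le b\ln(ea/b)$). The only cosmetic difference is that the paper phrases the binomial bound via $m^k/k!$ rather than the entropy-style inequality, but these are the same estimate after Stirling.
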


\begin{proof}
For $d \geq 3$, all upper bounds follow from the standard estimation ${m \choose k} \leq \frac{m^k}{k!}$ of the binomial coefficients in Theorem~\ref{polytopeupperbound} and further analyzes of the expressions for large $n$.

For $d \geq 4$, the lower bound follows from the estimation of the number of multidimensional permutations (Theorem~\ref{permutasym}), and for $d = 3$ it is improved by Theorem~\ref{d3lowangles}.

At last, the case $d = 2$ is the Birkhoff theorem for doubly stochastic matrices.
\end{proof}

A comparison of  weaker lower and upper bounds on the number of vertices of $3$-dimensional polystochastic matrices was given in~\cite{Zhangx2.extrempoints}.

On the basis of  these bounds, we propose the following conjecture.

\begin{con}
For every $d \geq 2$, there is a constant $c_d$, $1 \leq c_d \leq d$, such that for the number $V(n,d)$ of vertices of $\Omega_n^d$ we have
$$\ln V(n,d) = c_d n^{d-1} \ln n \cdot (1 + o(1)). $$
\end{con}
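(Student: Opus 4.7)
The proposition splits cleanly into upper and lower bounds. The upper bound for all $d \geq 3$ comes from Theorem~\ref{polytopeupperbound}; the lower bounds come from the Birkhoff theorem (case $d = 2$), Theorem~\ref{permutasym} (case $d \geq 4$), and Theorem~\ref{d3lowangles} (case $d = 3$). So my plan is to verify the upper bound uniformly for $d \geq 3$ by asymptotic analysis of the binomial-coefficient expression, and then handle the lower bound separately in each of the three cases.

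For the upper bound, I would set $N = n^d$ and $m = (n-1)^d$ and first record the asymptotic sizes
$$N - m = dn^{d-1}(1 + o(1)), \qquad N - \lfloor (m+1)/2 \rfloor \sim \tfrac{n^d}{2}$$
as $n \to \infty$ with $d$ fixed, using the expansion $(n-1)^d = n^d - dn^{d-1} + O(n^{d-2})$. Both binomials in Theorem~\ref{polytopeupperbound} then have the form $\binom{K}{N-m}$ with $K \sim n^d/2$, so the inequality $\binom{K}{r} \leq (eK/r)^r$ together with $K/(N-m) \sim n/(2d)$ gives
$$\ln \binom{K}{N-m} \leq (N-m)\ln\frac{eK}{N-m} = dn^{d-1}\ln\frac{en}{2d}\cdot(1+o(1)) = dn^{d-1}\ln n\cdot(1+o(1)).$$
Adding the two binomial terms contributes only $\ln 2$, which is absorbed into the $o(1)$, and this produces the required upper bound for $d = 3$ and for $d \geq 4$ simultaneously.

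For the lower bound I would split into three cases. When $d \geq 4$, every $d$-dimensional permutation of order $n$ is a vertex of $\Omega_n^d$, so $V(n,d) \geq M(n,d)$, and Theorem~\ref{permutasym} yields
$$\ln M(n,d) = n^{d-1}\bigl(\ln n - (d-1)\bigr) + o(n^{d-1}) = n^{d-1}\ln n\cdot(1+o(1)).$$
When $d = 3$, Theorem~\ref{d3lowangles} combined with the same asymptotic for $\ln M(n,3)$ gives
$$\ln V(n,3) \geq (3/2 - o(1))\ln M(n,3) = \tfrac{3}{2} n^{2}\ln n\cdot(1+o(1)).$$
When $d = 2$, the Birkhoff theorem gives $V(n,2) = n!$, and Stirling's formula gives $\ln n! = n\ln n\cdot(1+o(1))$.

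There is no serious obstacle. The only step requiring care is the asymptotic analysis of the binomial bound: one must notice that $N - m/2$ is of order $n^d$ whereas $N - m$ is of order $n^{d-1}$, so the ratio inside the logarithm grows like $n$ and produces the leading $\ln n$ factor, while the prefactor $N - m \sim dn^{d-1}$ gives the constant $d$ in the final answer.
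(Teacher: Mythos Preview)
The statement you are addressing is a \emph{Conjecture}, and the paper does not prove it; it is explicitly introduced with ``On the basis of these bounds, we propose the following conjecture.'' What you have written is, in substance, a proof of the \emph{preceding Proposition} (the one giving the two-sided bounds $n^{d-1}\ln n\cdot(1+o(1)) \le \ln V(n,d) \le d\,n^{d-1}\ln n\cdot(1+o(1))$ for $d\ge 4$, the sharper $3/2$ lower bound for $d=3$, and the exact asymptotic for $d=2$). Your derivation of that Proposition is correct and matches the paper's own argument essentially line for line: Theorem~\ref{polytopeupperbound} plus a routine binomial estimate for the upper bound, Theorem~\ref{permutasym} for the generic lower bound, Theorem~\ref{d3lowangles} for the $d=3$ improvement, and the Birkhoff theorem with Stirling for $d=2$.

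The genuine gap is that these bounds do \emph{not} prove the Conjecture. The Conjecture asserts that for each fixed $d$ the quantity $\dfrac{\ln V(n,d)}{n^{d-1}\ln n}$ converges to a single constant $c_d$. Your argument shows only that
\[
1 \;\le\; \liminf_{n\to\infty}\frac{\ln V(n,d)}{n^{d-1}\ln n}
\;\le\; \limsup_{n\to\infty}\frac{\ln V(n,d)}{n^{d-1}\ln n} \;\le\; d
\]
(with the lower $1$ replaced by $3/2$ when $d=3$); it says nothing about whether the $\liminf$ and $\limsup$ coincide. Establishing that limit exists would require a new idea --- some kind of subadditivity, a matching construction, or a sharper structural understanding of the vertex set --- none of which is supplied here or in the paper. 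In short: your write-up is a clean proof of the Proposition, but the Conjecture remains open, and the paper treats it as such.
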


Let us turn to the case when the order  of polystochastic matrices is fixed but the dimension grows.  

As we noted before, there are only two vertices in the polytope $\Omega_2^d$.  It is also well known that for every $d$, the $d$-dimensional permutation of order $3$ is unique up to the equivalence, and there are $3 \cdot 2^{d-1}$ different such multidimensional permutations. 

The asymptotics of the number of $d$-dimensional permutations of order $4$ were found in~\cite{PotKrot.asymptquasi4}. It gives that  $\log_2 V(4,d) \geq   2^{d-1}  (1 + o(1)) $. 

 Up to date, the best lower bounds on the number of  $d$-dimensional permutations of order $n$, when $n \geq 5$ is fixed, were proved by Potapov and Krotov in~\cite{PotKrot.numbernary}. Their results imply the following.
 
\begin{teorema}[\cite{PotKrot.numbernary}] \label{nfixedpermbound}
Let $V(n,d)$ be the number of vertices of the polytope of $d$-dimensional matrices of order $n$. Then  $\log_2 V(5,d) \geq   3^{(d-1)/3}   (1 - o(1)) $ as $d \rightarrow \infty$,  $\log_2 V(n,d) \geq   {\left( \frac{n}{2} \right) }^{d-1}$ if $n \geq 6$ is even, and  $\log_2 V(n,d) \geq   {\left( \frac{n-3}{2} \right)}^{\frac{d-1}{2}} {\left( \frac{n-1}{2} \right)}^{\frac{d-1}{2}} $ if $n \geq 7$ is odd.
 \end{teorema}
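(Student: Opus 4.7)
The plan is to exploit the observation, already recorded in Section~1, that every $d$-dimensional permutation of order $n$ is a vertex of $\Omega_n^d$. Let $M(n,d)$ denote the number of $d$-dimensional permutations of order $n$. The preceding remark yields the trivial but sufficient estimate
$$V(n,d) \geq M(n,d),$$
so the theorem reduces to invoking the lower bounds on $M(n,d)$ for fixed $n$ and growing $d$ that are proved in~\cite{PotKrot.numbernary}.

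Through the bijection between $d$-dimensional permutations of order $n$ and $(d-1)$-dimensional Latin hypercubes of order $n$, these permutation counts are in turn lower bounds on the number of such Latin hypercubes. The construction in~\cite{PotKrot.numbernary} is a switching scheme: one fixes a base Latin hypercube coming from a suitable algebraic object (typically a Cayley table of a group of order $n$), identifies a large family of pairwise disjoint local rearrangements (``switchings'') whose toggling preserves the Latin property, and shows they can be flipped independently. Exhibiting $N$ such independent switchings produces at least $2^N$ pairwise distinct hypercubes, and hence $\log_2 V(n,d) \geq \log_2 M(n,d) \geq N$.

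For even $n \geq 6$, taking the Cayley table of $\mathbb{Z}_2 \times \mathbb{Z}_{n/2}$ as the base and iterating the standard two-dimensional $2\times 2$ switch across the remaining $d-2$ coordinate axes yields roughly $(n/2)^{d-1}$ independent switchings, which matches the claimed bound. For odd $n \geq 7$, a mixed construction combining factors of size $2$ and of size $3$ (so that $n-1$ splits as an even multiple of $2$ and a single odd residue is handled separately) gives a switching count whose logarithmic rate is the geometric mean $\sqrt{\frac{n-3}{2}\cdot\frac{n-1}{2}}$ per coordinate, precisely the stated expression. The case $n = 5$ is too small for either construction and requires a tailored argument, which accounts for the slower rate $3^{(d-1)/3}$ and its appearance as a separate case.

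The genuine difficulty lies entirely inside the cited work~\cite{PotKrot.numbernary}: one has to certify that the chosen switchings are independent, i.e.\ that distinct subsets of them yield distinct hypercubes, and this demands careful combinatorial accounting for each of the three regimes. For Theorem~\ref{nfixedpermbound} itself no further work is needed: the three bounds follow by coupling the inequality $V(n,d) \geq M(n,d)$ with the corresponding bounds on $M(n,d)$ from~\cite{PotKrot.numbernary}.
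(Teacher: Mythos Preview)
Your proposal is correct and matches the paper exactly: the paper gives no proof of this theorem at all, merely stating that the bounds of \cite{PotKrot.numbernary} on the number of multidimensional permutations ``imply the following,'' which is precisely your reduction $V(n,d)\ge M(n,d)$. Your additional sketch of the switching constructions inside \cite{PotKrot.numbernary} goes beyond what the paper records (and its specific details cannot be checked against the present text), but the logical content of the argument is identical.
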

 
Till the present work, there were no rich constructions and lower bounds on the vertices of $\Omega_n^d$ for fixed $n$ that are different from the multidimensional permutations. 

Concerning the upper bound, an expansion of the binomial coefficients in  Theorem~\ref{polytopeupperbound} for fixed $n$ gives the following.

\begin{utv}\label{asymupper}
If $n$ is fixed, then  for the logarithm  of the number $V(n,d)$ of vertices of $d$-dimensional polystochastic matrices of order $n$ we have
$$\log_2 V(n,d) \leq \log_2  \frac{n}{n-1} \cdot  \frac{d}{2}  \cdot  (n-1)^d  (1 + o(1)) \mbox{ as } d \rightarrow \infty.$$
\end{utv}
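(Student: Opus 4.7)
The plan is to start from Theorem~\ref{polytopeupperbound} and carefully expand the two binomial coefficients for fixed $n$ and $d\to\infty$. Set $k=n^d$ (the number of facets), $m=(n-1)^d$ (the dimension of the polytope), and $b=k-m=n^d-(n-1)^d$. Writing $a_1=k-\lfloor (m+1)/2\rfloor$ and $a_2=k-\lfloor (m+2)/2\rfloor$, Theorem~\ref{polytopeupperbound} gives $V(n,d)\le \binom{a_1}{b}+\binom{a_2}{b}\le 2\binom{a_1}{b}$. The key observation is that since $n$ is fixed and $d\to\infty$, we have $k/m=(n/(n-1))^d\to\infty$, so $a_i-b=m-\lfloor(m+1)/2\rfloor=\lfloor m/2\rfloor$ is much smaller than $b$, and thus it is more efficient to use the symmetric form $\binom{a_1}{b}=\binom{a_1}{a_1-b}=\binom{a_1}{\lfloor m/2\rfloor}$.

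Next, I would apply the standard estimate $\binom{a}{t}\le a^t/t!$ with $a=a_1\le k=n^d$ and $t=\lfloor m/2\rfloor$, and then take $\log_2$:
\[
\log_2\binom{a_1}{\lfloor m/2\rfloor}\le \lfloor m/2\rfloor\cdot d\log_2 n -\log_2\lfloor m/2\rfloor!.
\]
Using Stirling's formula in the form $\log_2 N!=N\log_2 N-N\log_2 e+O(\log N)$ and substituting $\log_2 m=d\log_2(n-1)$, one gets
\[
\log_2\binom{a_1}{\lfloor m/2\rfloor}\le \frac{m}{2}\bigl(d\log_2 n-d\log_2(n-1)\bigr)+O(m)=\frac{md}{2}\log_2\frac{n}{n-1}+O(m).
\]

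Finally I would compare the error $O(m)=O((n-1)^d)$ with the main term $\tfrac{d}{2}(n-1)^d\log_2\tfrac{n}{n-1}$: their ratio is $O(1/d)\to 0$, which shows that the error, together with the extra $\log_2 2=1$ coming from summing the two binomials, is absorbed into the multiplicative factor $(1+o(1))$. This yields the claimed bound
\[
\log_2 V(n,d)\le \log_2\frac{n}{n-1}\cdot\frac{d}{2}\cdot (n-1)^d\,(1+o(1)).
\]
There is no real obstacle here; the only point requiring care is the asymptotic bookkeeping, in particular ensuring that the Stirling error is genuinely of lower order than the leading term, which works precisely because $m=(n-1)^d$ grows exponentially in $d$ while the surplus factor $d$ is present in the main term.
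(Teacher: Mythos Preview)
Your proposal is correct and follows exactly the route the paper indicates: the paper gives no detailed argument, only the sentence ``an expansion of the binomial coefficients in Theorem~\ref{polytopeupperbound} for fixed $n$ gives the following,'' and your computation is precisely that expansion. The one useful device you make explicit, and which the paper leaves unstated, is passing to the symmetric form $\binom{a_1}{b}=\binom{a_1}{\lfloor m/2\rfloor}$ before applying $\binom{a}{t}\le a^t/t!$; this is essential here since $b\sim n^d$ is much larger than $\lfloor m/2\rfloor\sim (n-1)^d/2$, and bounding $\binom{a_1}{b}$ directly by $a_1^b/b!$ would give a far worse estimate.
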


Thus, we have a substantial difference in lower and upper bounds  for the numbers of vertices of $\Omega_n^d$ when $n$ is fixed, and finding the asymptotic of these numbers is an interesting open problem.

\section{Lower bound on the number of vertices of $\Omega_3^d$} \label{constrsec}

In this section, we prove the following theorem.

\begin{teorema} \label{n3lowerbound}
For the number of vertices $V(3,d)$ we have
$$\log_2 V(3,d) \geq  c 2^{\delta d} (1 + o(1) ),$$
where $c =  \frac{1}{4}\log_2 9/5 \approx 0.212$ and $\delta \approx 0.047$.
\end{teorema}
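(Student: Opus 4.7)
The plan is to prove the bound by an iterative doubling construction. Starting from an explicit finite family of vertices in some fixed low-dimensional polytope $\Omega_3^{d_0}$, I amplify it by a combining operation that converts two vertices of $\Omega_3^d$ into a vertex of $\Omega_3^{d+c}$ for a small constant $c$. Each application roughly squares the number of accessible vertices while only additively increasing the dimension, which immediately yields doubly exponential growth.

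The first step is to exhibit a base family of vertices of $\Omega_3^{d_0}$ with entries in $\{0,\nicefrac12,1\}$ and \emph{rigid} $\nicefrac12$-support, meaning that no nontrivial zero-sum matrix is supported on it. For $n=3$ this rigidity condition is purely combinatorial: in each line the $\nicefrac12$-support contains $0$ or $2$ cells, and the associated signed hypergraph on these cells must admit only the trivial alternating labeling. Natural candidates come from pairs of disjoint multidimensional permutations of order~$3$ whose union is rigid, or from iterated latin compositions built from the unique latin square of order~$3$ together with a non-permutation vertex of $\Omega_3^3$. The base family must also carry a distinguished ``interface'' substructure (for instance, a designated slice in some chosen direction) that will be used as a gluing site by the combining operation.

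Next I define a combining map $(A,B)\mapsto A\oplus B$ from $\Omega_3^d\times\Omega_3^d$ into $\Omega_3^{d+c}$. The guiding principle is the three-slice decomposition of elements of $\Omega_3^{d+1}$: every such matrix is determined by three slices $(C^{(1)},C^{(2)},C^{(3)})\in (\Omega_3^d)^3$ satisfying $C^{(1)}+C^{(2)}+C^{(3)}=J$, where $J$ is the $d$-dimensional all-ones matrix of order~$3$. Using $c$ auxiliary dimensions, I embed $A$ and $B$ into disjoint slices of the new matrix, glued together by a fixed rigid ``backbone'' along the interface. Injectivity of $(A,B)\mapsto A\oplus B$ is comparatively easy: $A$ and $B$ can be read off from canonical slices of $A\oplus B$.

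The main obstacle is to verify that $A\oplus B$ is itself a vertex of $\Omega_3^{d+c}$. Concretely, any zero-sum matrix $Z$ supported on the fractional cells of $A\oplus B$ must be shown to vanish. I would proceed by restricting $Z$ along the auxiliary coordinates to obtain zero-sum matrices on the fractional supports of $A$ and of $B$ separately; both vanish by the inductive hypothesis that $A$ and $B$ are vertices, and the backbone then forces $Z$ to vanish on the gluing region as well. This rigidity analysis determines the smallest admissible $c$ and hence $\delta=1/c$. Iterating $k$ times from a base of $K_0$ vertices in $\Omega_3^{d_0}$ gives at least $K_0^{2^k}$ pairwise distinct vertices in $\Omega_3^{d_0+kc}$; setting $d=d_0+kc$ yields
\[
\log_2 V(3,d)\;\ge\;2^{(d-d_0)/c}\log_2 K_0\;=\;(\log_2 K_0)\cdot 2^{-d_0/c}\cdot 2^{\delta d},
\]
which has the claimed form, with the numerical values $c=\tfrac14\log_2(9/5)$ and $\delta\approx 0.047$ emerging from the explicit choice of base family, combining operation, and the resulting additive constant $c$.
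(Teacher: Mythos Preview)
Your proposal is a programme rather than a proof, and it cannot yield the stated constants even in principle. The combining map $(A,B)\mapsto A\oplus B$ is never actually defined; you list the properties it should have and then assume them. The central rigidity step---that the only zero-sum matrix supported on the fractional cells of $A\oplus B$ is zero---is asserted but not argued: even granting that the restrictions of such a $Z$ to the $A$-slice and the $B$-slice vanish, you give no mechanism by which the undefined ``backbone'' forces $Z$ to vanish on the gluing region. More decisively, in your scheme $\delta=1/c_{\mathrm{add}}$ with $c_{\mathrm{add}}\in\mathbb{N}$ the number of auxiliary coordinates, so $\delta\in\{1,\tfrac12,\tfrac13,\dots\}$; the value $\delta\approx 0.0467$ in the theorem is determined by a transcendental equation and is not of this form, and your leading constant $(\log_2 K_0)\,2^{-d_0/c_{\mathrm{add}}}$ has no reason to equal $\tfrac14\log_2(9/5)$. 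You have conflated the additive dimension cost with the theorem's multiplicative constant by naming both of them ``$c$''.

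The paper proceeds quite differently and does not construct vertices directly at all. It builds a large family $\mathcal{A}=\{A_x:x\in C\}$ of polystochastic matrices (not vertices) in $\Omega_3^{d+1}$, indexed by a trifferent code $C\subset I_3^N$ with $N=\lfloor 2^{\mu d}\rfloor$. The zero set of each $A_x$ is an $\varepsilon$-sparse set, whose existence comes from a Gilbert--Varshamov-type bound; this is what fixes $\delta=\mu=\varepsilon/8$ through the equation $1-H(\varepsilon)=\varepsilon/8$. The trifference of $C$ guarantees that for any three $A_{x^1},A_{x^2},A_{x^3}\in\mathcal{A}$ some line of $I_3^{d+1}$ lies entirely outside $supp(A_{x^1})\cap supp(A_{x^2})\cap supp(A_{x^3})$, so no vertex $B$ can have its support contained in all three. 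Hence every vertex occurring in a convex decomposition of some $A_x$ serves at most two members of $\mathcal{A}$, giving $V(3,d+1)\ge|C|/2=\tfrac12(9/5)^{N/4}$; the constant $\tfrac14\log_2(9/5)$ is precisely the K\"orner--Marton rate for trifferent codes.
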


The main idea of the proof is to construct  a large set $\mathcal{A}$  of $d$-dimensional polystochastic matrices of order $3$ such that for every three matrices $A_1, A_2, A_3 \in \mathcal{A}$ the faces of $\Omega_3^d$ defined by their supports do not share a common vertex.   It means that every vertex from a decomposition of some matrix $A \in \mathcal{A}$ into a convex sum of vertices can appear  in at most one  other decomposition of some matrix $B \in \mathcal{A}$, and, therefore, we have at least $|\mathcal{A}| / 2$ vertices   of $\Omega_3^d$.

But firsty we aim to construct a rich set of polystochastic matrices. 
For this purpose  we need  several auxiliary notions and definitions.

Given indices $\alpha, \beta \in I_n^d$,  let $\rho(\alpha, \beta)$ denote the Hamming distance between $\alpha$ and $\beta$, i.e.,  the number of  positions in which these indices differ. 

For an index $\alpha \in I_3^d$, define the set of indices $T_{\alpha}  = \{ \beta| \rho(\alpha, \beta) = d \}$.   In other words, $T_\alpha$ is the $d$-dimensional submatrix of order $2$  formed by indices at the maximal distance from the index $\alpha$. 

Let $S \subseteq I_3^d$ be a subset of indices of a $d$-dimensional matrix of order $3$. Given $\varepsilon$, $0 < \varepsilon < 1$,  will say that  the set $S$  is \textit{$\varepsilon$-sparse} if 
\begin{enumerate}
\item for all $\alpha, \beta \in S$, we have $\rho(\alpha, \beta) \geq \varepsilon d$;
\item for every $\alpha \in S$ there is an index $\gamma_\alpha \in I_3^d$ such that $T_{\gamma_\alpha} \cap S = \{ \alpha\}$. 
\end{enumerate}

We are going to use $\varepsilon$-sparse sets as the complements of the supports of polystochastic matrices of order $3$.  But firstly, we show that $\varepsilon$-sparse sets exist and can be quite large.  For this purpose, we need one well-known statement of  the coding theory that follows from the works of Shannon~\cite{shannon.optcode}.

\begin{utv}[see~\cite{shannon.optcode}]   \label{binomentrop}
Let $0 \leq \varepsilon \leq 1/2$ and $H(\varepsilon) = - \varepsilon \log_2 \varepsilon - (1 - \varepsilon) \log_2 (1 - \varepsilon)$ be the binary entropy. Then for every $d$ and $N \leq 2^{d (1 - H(\varepsilon))}$ there is a set $S \subseteq I_2^d$ of size $N$  such that for every $\alpha, \beta \in S$ we have $\rho(\alpha, \beta) \geq \varepsilon d$.
\end{utv}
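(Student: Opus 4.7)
The statement is the classical Gilbert--Varshamov existence result, and the plan is to reproduce its standard greedy proof. Initialize a pool of available indices as the whole cube $I_2^d$. Iteratively pick any $\alpha$ from the pool, place it into $S$, and then delete from the pool every $\beta$ with $\rho(\alpha, \beta) < \varepsilon d$. By construction, each newly chosen point lies outside the forbidden ball of every previously chosen point, so any two elements of $S$ satisfy $\rho(\alpha, \beta) \geq \varepsilon d$. It therefore suffices to lower-bound the number of rounds the procedure can perform.

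At each round the number of deleted indices is at most the size of a Hamming ball of radius $\lceil \varepsilon d \rceil - 1$ in $I_2^d$, that is, at most $\sum_{k=0}^{\lfloor \varepsilon d \rfloor} \binom{d}{k}$. I would invoke the standard entropy estimate
\[
\sum_{k=0}^{\lfloor \varepsilon d \rfloor} \binom{d}{k} \leq 2^{d H(\varepsilon)} \qquad \text{for } 0 \leq \varepsilon \leq 1/2,
\]
which follows from $1 = \sum_{k=0}^{d} \binom{d}{k} \varepsilon^k (1-\varepsilon)^{d-k}$ by restricting to $k \leq \varepsilon d$ and using that $\varepsilon^k(1-\varepsilon)^{d-k}$ attains its minimum on this range at $k = \lfloor \varepsilon d \rfloor$, where it equals $2^{-dH(\varepsilon)}$. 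Since $|I_2^d| = 2^d$ and each round removes at most $2^{dH(\varepsilon)}$ indices, the greedy construction produces a set of size at least $2^d / 2^{dH(\varepsilon)} = 2^{d(1-H(\varepsilon))}$. For any $N$ not exceeding this bound, one simply truncates to the first $N$ selected indices.

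The argument is essentially routine. The only mildly delicate point is that the entropy bound on the partial binomial sum requires $\varepsilon \leq 1/2$, which is precisely the hypothesis used to guarantee that $\varepsilon^k(1-\varepsilon)^{d-k}$ is minimized at the right endpoint of $[0, \varepsilon d]$; without this, the minimum could lie in the interior and the clean bound $2^{dH(\varepsilon)}$ would fail. Once this estimate is in hand, no further combinatorial input is needed, and I would not expect the proof to exceed a short paragraph of calculation.
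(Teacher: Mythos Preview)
Your argument is the standard Gilbert--Varshamov greedy proof and is correct; the paper itself does not prove this proposition but simply cites it as a known consequence of Shannon's work, so there is nothing to compare against. One tiny wording fix: at $k=\lfloor \varepsilon d\rfloor$ the quantity $\varepsilon^k(1-\varepsilon)^{d-k}$ is \emph{at least} $2^{-dH(\varepsilon)}$ rather than exactly equal to it (equality holds only when $\varepsilon d$ is an integer), but since the function is decreasing in $k$ for $\varepsilon\le 1/2$ the inequality you need still follows.
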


\begin{utv} \label{sparseexist}
Let $0 < \varepsilon  \leq 1/2$.  Then for every  $N \leq 2^{(1 - H(\varepsilon)) d}$ there exists an $\varepsilon$-sparse $S$ set of size $N$ in $I_3^d$.
\end{utv}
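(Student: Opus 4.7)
The plan is to apply Proposition \ref{binomentrop} to produce a well-separated code in $I_2^d$ and then embed it into $I_3^d$ in a way that automatically furnishes the isolating indices $\gamma_\alpha$ required by condition (2). More precisely, by Proposition \ref{binomentrop} there exists $S \subseteq I_2^d = \{1,2\}^d$ of size $N$ with $\rho(\alpha,\beta) \geq \varepsilon d$ for all distinct $\alpha, \beta \in S$. I view $S$ as a subset of $I_3^d$ via the natural inclusion $\{1,2\}^d \subseteq \{1,2,3\}^d$. Condition (1) in the definition of $\varepsilon$-sparsity then holds trivially, since the Hamming distance in $I_3^d$ agrees with that in $I_2^d$ on such indices.

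For condition (2), the key observation is that when $\alpha \in \{1,2\}^d$, one can choose $\gamma_\alpha$ to stay inside $\{1,2\}^d$ and use the third symbol $3$ to cut out all elements of $S$ other than $\alpha$. Specifically, for each $\alpha = (\alpha_1, \dots, \alpha_d) \in S$ I would define $\gamma_\alpha = (\gamma_1, \dots, \gamma_d) \in I_3^d$ by $\gamma_i = 3 - \alpha_i$, so that $\gamma_i \in \{1,2\} \setminus \{\alpha_i\}$. Then by definition of $T_{\gamma_\alpha}$, every $\beta \in T_{\gamma_\alpha}$ satisfies $\beta_i \neq \gamma_i$, i.e.\ $\beta_i \in \{1,2,3\} \setminus \{\gamma_i\} = \{\alpha_i, 3\}$ for all $i$.

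Now if, in addition, $\beta \in S \subseteq \{1,2\}^d$, then $\beta_i \neq 3$, and so necessarily $\beta_i = \alpha_i$ for every coordinate $i$; thus $\beta = \alpha$ and $T_{\gamma_\alpha} \cap S = \{\alpha\}$, establishing condition (2). Note that $\alpha \in T_{\gamma_\alpha}$ is automatic from $\gamma_i \neq \alpha_i$. Combined with $|S| = N \leq 2^{(1-H(\varepsilon))d}$, this gives the desired $\varepsilon$-sparse set.

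There is no real obstacle here beyond spotting the right embedding: the whole point of using $\{1,2\}^d$ rather than a general subset of $\{1,2,3\}^d$ is that the third symbol $3$ is always available to play the role of $\gamma_i$, which forces any rival $\beta \in S$ inside $T_{\gamma_\alpha}$ to agree with $\alpha$ in every coordinate. Thus isolation via $\gamma_\alpha$ comes for free once the distance condition from Shannon's bound is in place.
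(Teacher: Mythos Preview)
Your proof is correct and follows essentially the same approach as the paper: embed a binary code with minimum distance $\varepsilon d$ (from Proposition~\ref{binomentrop}) into a two-symbol sub-hypercube of $I_3^d$, and take $\gamma_\alpha$ to be the coordinatewise swap of $\alpha$ within that sub-hypercube so that $T_{\gamma_\alpha}$ meets the sub-hypercube only in $\alpha$. The only cosmetic difference is that the paper uses the sub-hypercube $\{2,3\}^d=T_{\mathbf{1}}$ while you use $\{1,2\}^d$.
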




\begin{proof}
For shortness, we denote the index $(1 ,\ldots, 1)$ from $I_3^d$ by $\textbf{1}$ and let $T_{\textbf{1}} = \{ \alpha \in I_3^d : \alpha_i \in \{ 2, 3\}\}$. 
By Proposition~\ref{binomentrop}, for every  $N \leq 2^{(1 - H(\varepsilon)) d}$ there is a subset  $S$   of $T_{\textbf{1}}$ such that  for all $\alpha, \beta \in S$ it holds $\rho(\alpha, \beta) \geq  \varepsilon d$. 

Let us show that all such sets $S$ satisfy the second condition of the definition of $\varepsilon$-sparse sets.  Given $\alpha \in T_{\textbf{1}} $, consider an index $\gamma$ from $T_{\textbf{1}}$ such that $\gamma_i = 2$ if $\alpha_i = 3$, and $\gamma_i= 3$ if $\alpha_i = 2$ for every $i \in \{ 1,\ldots, d\}$.  It is easy to see that   $T_{\gamma} \cap T_{\textbf{1}} = \{ \alpha\}$. Since $S \subseteq T_{\textbf{1}}$, for every $\alpha \in S$ we have that   $T_{\gamma} \cap S = \{ \alpha\}$, so we can take $\gamma$ as $\gamma_\alpha$.
\end{proof}

Now we prove that  for every $\varepsilon$-sparse set $S$ (if it has not  very large size and $d$ is not very small), we can find a polystochastic matrix of order $3$ whose  complement of the support is exactly the set $S$.

\begin{lemma} \label{sparsetopoly}
Let $d \geq 14 / \varepsilon$ and $S \subseteq I_3^d$ be an $\varepsilon$-sparse set  of size $N = |S|  \leq 2^{\varepsilon d /4}$. Then there exists a  $d$-dimensional polystochastic matrix  $A$ of order $3$ such that $supp(A) = I_3^d \setminus S$. 
\end{lemma}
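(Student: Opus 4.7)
The plan is to construct $A$ as a perturbation of the uniform polystochastic matrix $\tfrac{1}{3}J$ by a linear combination of zero-sum matrices that zeros out exactly the entries indexed by $S$. For each $\alpha \in I_3^d$ I would use the explicit zero-sum matrix
\[ W_\alpha(\beta) = (-1/2)^{\rho(\alpha, \beta)}. \]
The zero-sum property follows from a three-term line-sum: along any line in direction $i$ with the other coordinates fixed, exactly one of the three entries agrees with $\alpha$ in position $i$ (so has distance $\rho_0$ from $\alpha$) while the other two disagree (distance $\rho_0 + 1$), giving line sum $(-1/2)^{\rho_0}(1 + 2\cdot(-1/2)) = 0$. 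Note that $W_\alpha(\alpha) = 1$ and $|W_\alpha(\beta)|$ decays exponentially in the Hamming distance.

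I would then look for $A$ in the form $A = \tfrac{1}{3}J - \sum_{\alpha \in S} c_\alpha W_\alpha$, which is automatically polystochastic for any choice of scalars. The conditions $A(\alpha) = 0$ for $\alpha \in S$ translate to the $|S| \times |S|$ linear system $Mc = \tfrac{1}{3}\mathbf{1}$ with $M_{\alpha\alpha'} = (-1/2)^{\rho(\alpha, \alpha')}$. Using the pairwise-distance part of $\varepsilon$-sparseness together with $|S| \le 2^{\varepsilon d/4}$, each off-diagonal row sum of $M$ is bounded by $(|S|-1)(1/2)^{\varepsilon d} \le 2^{-3\varepsilon d/4}$, which is at most $2^{-21/2}$ whenever $d \ge 14/\varepsilon$. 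Hence $M$ is strictly diagonally dominant and invertible; the Neumann series gives $c_\alpha = 1/3 + O(2^{-3\varepsilon d/4})$, in particular $\max_\alpha |c_\alpha| < 0.34$.

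To verify strict positivity at $\beta \notin S$, I would bound the perturbation by $\max_\alpha |c_\alpha| \cdot \sum_{\alpha \in S}(1/2)^{\rho(\alpha,\beta)}$. Let $\alpha^* \in S$ be a point closest to $\beta$, at distance $r^* \ge 1$. By the triangle inequality the remaining $\alpha \in S$ all lie at distance at least $\varepsilon d - r^*$ from $\beta$, so
\[ \sum_{\alpha \in S}(1/2)^{\rho(\alpha,\beta)} \le 2^{-r^*} + (|S|-1)\, 2^{r^* - \varepsilon d} \le 2^{-r^*} + 2^{r^* - 3\varepsilon d/4}, \]
and for the regime $r^* \ge \varepsilon d/2$ one also has $|S|\cdot 2^{-r^*} \le 2^{-\varepsilon d/4}$, which is even smaller. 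The convex function $2^{-r^*} + 2^{r^* - 3\varepsilon d/4}$ on $[1, \varepsilon d/2]$ attains its maximum at $r^* = 1$, giving $\tfrac{1}{2} + 2 \cdot 2^{-3\varepsilon d/4} < 0.51$. Combined with the bound on $|c_\alpha|$, the perturbation at $\beta$ has absolute value below $1/6 + o(1) < 1/3$, so $A(\beta) > 0$, as required.

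The main obstacle is the strict positivity step. A more naive approach uses the zero-sum matrices supported on the subcubes $T_{\gamma_\alpha}$ guaranteed by the second clause of $\varepsilon$-sparseness; this makes the linear system diagonal and allows one to take $c_\alpha = 1/3$ directly. However, the $\pm 1$ alternating sign pattern on $T_{\gamma_\alpha}$ inevitably creates new zeros at points $\beta \notin S$ that lie in some $T_{\gamma_\alpha}$ with an even number of $1$-coordinates, so the resulting matrix is not in the interior of the face cut out by $S$. Replacing the hard cutoff by the exponentially decaying kernel $(-1/2)^{\rho(\alpha,\beta)}$ resolves this, at the cost of a non-diagonal but strictly diagonally dominant linear system, and the size bound $|S| \le 2^{\varepsilon d/4}$ is exactly what simultaneously guarantees the invertibility of $M$ and the pointwise positivity of $A$.
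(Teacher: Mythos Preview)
Your argument is correct, but the route differs from the paper's in an interesting way. Both proofs use the same zero-sum kernels $W_\alpha(\beta)=(-1/2)^{\rho(\alpha,\beta)}$. The paper, however, takes the coefficients to be exactly $1$, obtaining a first approximation $M'=\sum_{\alpha\in S}W_\alpha$ with $m'_\alpha=1-\delta_\alpha$ on $S$, and then removes the small residuals $\delta_\alpha$ by adding the subcube-supported zero-sum matrices $R^\alpha$ (with entries $(-1)^{\rho(\alpha,\beta)}\delta_\alpha$ on $T_{\gamma_\alpha}$); this second step is precisely where the second clause of $\varepsilon$-sparseness is used, to guarantee that the corrections do not interfere on $S$. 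You instead absorb the correction into the coefficients by inverting the diagonally dominant Gram-type matrix $M$, which lets you hit the zeros on $S$ exactly in one shot and, notably, uses only the pairwise-distance clause of $\varepsilon$-sparseness. So your proof is a bit stronger (the second clause is not needed for this lemma), while the paper's is fully explicit and avoids any matrix inversion. Your closing paragraph correctly rejects a \emph{pure} subcube approach, but be aware that the paper does not do that either: it uses the subcubes only for the $O(2^{-3\varepsilon d/4})$-sized residual, so no unwanted zeros are created.
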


\begin{proof}
We will look for a $d$-dimensional zero-sum matrix $M$ of order $3$ such that $m_\alpha = 1$ for all $\alpha \in S$ and $|m_{\alpha}| \leq 3/4$ for all $\alpha \not\in S$. Then the matrix $A = J  - \frac{1}{3} M$ is the required polystochastic matrix, where $J$ is the $d$-dimensional polystochastic matrix of order $3$, whose all entries are equal to $1/3$.

We construct the matrix $M$ in two steps.

1. Given index $\alpha \in I_3^d$, define the  matrix $F^{\alpha} = (f^\alpha_\beta)_{\beta \in I_n^d}$  with entries $f^\alpha_\beta = \left( - \frac{1}{2} \right)^{\rho(\alpha,\beta)}$. Note that all matrices $F^\alpha$ are zero-sum because each line consists of indices $\beta_1, \beta_2, \beta_3$ such that $\rho (\alpha, \beta_1) = k$ and  $\rho (\alpha, \beta_2) = \rho(\alpha, \beta_3) = k +1$ for some $k \in \{ 0, \ldots d-1\}$. 

Consider the zero-sum matrix
$$M' = \sum\limits_{\alpha \in S} F^\alpha$$
and  estimate its entries.  For each  $\alpha \in S$, denote $\delta_{\alpha} = 1 - m'_\alpha$.  Since $S$ is an $\varepsilon$-sparse set of size $N$,  we have that 
$$|\delta_\alpha| = |1 - m'_\alpha| \leq \frac{N}{2^{\varepsilon d}} \leq 2^{-3 \varepsilon d /4},$$
because for all $\gamma \in S$, $\gamma \neq \alpha$, each matrix $F^\gamma$ has the absolute value $f^{\gamma}_\alpha$ in index $\alpha$ not greater than $1 / 2^{\varepsilon d}$.   In particular, for $\delta = \max\limits_{\alpha \in S}  |\delta_\alpha|$ we have $\delta  \leq 2^{ - 3 \varepsilon d /4} $. 

Suppose now that $\alpha \not\in S$ and  $\gamma \in S$ is an index such that $\rho(\alpha, \gamma)$ is minimal.  Then from the definition of an $\varepsilon$-sparse set, for every other $\beta \in S$, $\beta \neq \gamma$, we have that $\rho (\alpha, \beta) \geq (\varepsilon d - 1)/ 2$. Using $N \leq 2^{\varepsilon d /4}$, we obtain
 $$|m'_\alpha |   \leq  |f^\gamma_\alpha| + \sum\limits_{\beta \in S, \beta \neq \gamma}  |f^\beta_\alpha| \leq   \frac{1}{2}+  \frac{N}{2^{(\varepsilon d-1) /2}} \leq  \frac{1}{2} + 2^{- \varepsilon d / 4 + 1/2} \leq \frac{5}{8}, $$
 because $2^{- \varepsilon d / 4 + 1/2}  \leq 1/8$, when $d \geq 14/ \varepsilon$.
 
2.  Now we modify the  matrix $M'$ to obtain the required matrix $M$. 

Since the set $S$ is $\varepsilon$-sparse, for every $\alpha \in S$ there is $\gamma_\alpha \in I_3^d$ such that $T_{\gamma_\alpha} \cap S = \{ \alpha\}$.  Let $R^\alpha$ be the zero-sum matrix such that for every $\beta \in T_{\gamma_\alpha}$ the entry $r^\alpha_\beta = (-1)^{\rho(\alpha, \beta)}  \cdot \delta_\alpha $ and for every $\beta \not\in T_{\gamma_\alpha} $ we put $r^\alpha_\beta = 0$. Consider the matrix
$$M = M' + \sum\limits_{\alpha \in S} R^\alpha.$$
 
Using the definition of $\delta_\alpha$ and the fact that  for every $\alpha \in S$ it holds $T_{\gamma_\alpha} \cap S = \{ \alpha\}$, we see that $m_\alpha = 1$ for all $\alpha \in S$ as required.
 
Suppose that  $\alpha \not\in S$. Using inequalities  $N \leq 2^{\varepsilon d/4}$, $\delta \leq 2^{- 3 \varepsilon d/ 4}$, and $d \geq 14 / \varepsilon$, we obtain
 $$|m_\alpha| \leq |m'_\alpha| + \sum\limits_{\alpha \in S} |\delta_\alpha| \leq \frac{5}{8} + N\delta \leq \frac{5}{8} +  2^{-\varepsilon d / 2} \leq \frac{3}{4}.$$
\end{proof}

To construct many polystochastic matrices with the desired property,  we  also utilize $3$-perfect hash codes  that are known as trifferent codes.

Given $q \geq 2$ and $N \in \mathbb{N}$, a \textit{$q$-perfect hash code $C$ of block length $N$} is a collection $C$ of words of length $N$ under alphabet $\{ 1, \ldots, q\}$ such that for any distinct $q$ words $w_1, \ldots, w_q$  from $C$ there is a position $i$, $i \in \{1, \ldots, N \}$, for which $\{ w_j(i) \, | \,1 \leq j  \leq q \} = \{ 1, \ldots, q\}$.  In what follows, we identify words from $q$-perfect hash codes with indices from $I_q^N$. 

A set $C \subseteq I_3^N$ is called a \textit{trifferent code} if it is a $3$-perfect hash code.  Up to date, the asymptotically large trifferent codes were constructed by  K\"{o}rner  and Marton in~\cite{KorMar.perfhash}.

\begin{teorema}[\cite{KorMar.perfhash}, Theorem 1] \label{dispersedset}
For every $N \in \mathbb{N}$, there exists a trifferent code $C$ in $I_3^N$ of size $|C| =  \left(  \frac{9}{5}\right)^{N/4}$.
\end{teorema}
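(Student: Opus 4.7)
The plan is a random-coding argument with expurgation, sharpened by the method of types in the spirit of K\"orner and Marton. I identify codewords with strings in $\{1,2,3\}^N$ and call an unordered triple $\{w_1,w_2,w_3\}$ \emph{good} if some coordinate $i$ satisfies $\{w_1(i),w_2(i),w_3(i)\}=\{1,2,3\}$, and \emph{bad} otherwise; a trifferent code is precisely a set of codewords with no bad triple.

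First I would run the naive random construction. Draw $M$ codewords i.i.d.\ uniformly from $\{1,2,3\}^N$. At each coordinate, the probability that three independent uniform ternary symbols are all distinct equals $3!/3^3 = 2/9$, and independence across coordinates makes the probability that any fixed triple is bad exactly $(7/9)^N$. Hence the expected number of bad unordered triples is at most $\binom{M}{3}(7/9)^N$, and the standard expurgation --- discard one codeword from each bad triple --- produces a trifferent subcode of size at least $c_1 (9/7)^{N/2}$ for some absolute constant $c_1>0$. By itself this already gives the rate $\tfrac12\log_2(9/7) \approx 0.181$, which is strictly smaller than the target rate $\tfrac14\log_2(9/5) \approx 0.212$.

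To close this gap, I would refine the random ensemble by the method of types applied to \emph{pairs} of codewords. Classify each ordered pair $(w_i,w_j)$ by its joint empirical type, i.e.\ the distribution on $\{1,2,3\}^2$ induced by $(w_i(k),w_j(k))_{k=1}^N$. For every such joint type, the conditional probability that a uniformly random third word $w_\ell$ makes the triple bad factorises as a product over coordinates whose parameter is determined entirely by the joint type; under the uniform-marginal constraint forced by i.i.d.\ uniform sampling, the extremal value of this per-coordinate avoidance probability is $5/9$. Since the number of joint types is only polynomial in $N$, it contributes no exponential loss, and a Tur\'an-style expurgation on the 3-uniform bad-triple hypergraph (whose cost grows only as $M$, not as $M^2$) converts the improved per-coordinate bound into a trifferent code of size at least $c_2 (9/5)^{N/4}$.

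The main obstacle is the last refinement: one must pinpoint the extremal joint type on $\{1,2,3\}^2$ with uniform marginals that saturates avoidance at $5/9$ per coordinate, and one must verify that the hypergraph expurgation actually realises the full gain afforded by this improved exponent. Performing the convex optimisation over doubly stochastic joint distributions on $\{1,2,3\}^2$ and pairing it with a careful accounting on the bad-triple hypergraph --- rather than on a coarser pair hypergraph, which would only recover the weaker $(9/7)^{N/2}$ bound --- is the technical heart of the K\"orner--Marton construction.
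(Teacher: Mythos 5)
This statement is not proved in the paper at all: it is quoted verbatim from K\"orner and Marton~\cite{KorMar.perfhash}, so there is no internal proof to match your attempt against. Judged on its own terms, your proposal establishes rigorously only the weaker bound: the i.i.d.\ uniform sampling plus triple expurgation indeed yields a trifferent code of size $\Theta\bigl((9/7)^{N/2}\bigr)$, rate $\tfrac12\log_2\tfrac97\approx 0.181$, and that part is correct. The step that is supposed to lift this to $(9/5)^{N/4}$ is where the proof breaks. You claim that optimising the joint empirical type of a pair $(w_i,w_j)$ over doubly stochastic distributions on $\{1,2,3\}^2$ drives the per-coordinate avoidance probability for a uniformly random third word down to $5/9$. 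It cannot: at a coordinate where $w_i$ and $w_j$ agree the third word fails with probability $1$, and where they disagree it fails with probability $2/3$ (exactly one of three symbols completes the trifference), so for a joint type with diagonal mass $q$ the per-coordinate factor is $\tfrac23+\tfrac13 q\ge \tfrac23>\tfrac59$. Even in the extremal case $q=0$ (which i.i.d.\ uniform sampling does not typically produce; typical pairs have $q\approx 1/3$), the triple-expurgation calculation gives at best $M^3(2/3)^{N}\le M$, i.e.\ rate $\tfrac12\log_2\tfrac32\approx 0.29$ only for a code that cannot exist, and with the achievable $q=1/3$ it gives $\tfrac13\log_2\tfrac32\approx 0.195$, still short of $\tfrac14\log_2\tfrac95\approx 0.212$. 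So the refinement you describe is both unexecuted (you yourself defer the ``technical heart'') and, as stated, quantitatively incapable of reaching the claimed exponent; the actual K\"orner--Marton argument is not a pair-type optimisation of the naive triple count.

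Two practical remarks. First, since the theorem is a citation, the intended ``proof'' here is simply to invoke \cite{KorMar.perfhash}; reproving it is not required. Second, if you did want a self-contained argument inside this paper, the bound you actually established, $|C|\ge c_1(9/7)^{N/2}$, would already suffice to make Theorem~\ref{n3lowerbound} go through qualitatively (the number of vertices would still be doubly exponential in $d$), but the explicit constant $c=\tfrac14\log_2\tfrac95$ in that theorem would have to be replaced by $\tfrac12\log_2\tfrac97$.
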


Now we are ready to prove the main result of this section.

\begin{proof}[Proof of Theorem~\ref{n3lowerbound}]

Recall that we aim to construct  a large set $\mathcal{A}$  of $d$-dimensional polystochastic matrices of order $3$ such that for every three matrices $A_1, A_2, A_3 \in \mathcal{A}$ there are no vertices $B$ of $\Omega_3^d$ for which $supp(B) \subseteq supp (A_1) \cap supp(A_2) \cap supp(A_3)$. 

Let $\varepsilon \leq 1/2 $ be a solution of  $1 - H(\varepsilon) = \varepsilon /8 $, $\varepsilon \approx 0.3735$,  and put $\mu =  \varepsilon /8$.  Then by Proposition~\ref{sparseexist}, there exists an $\varepsilon$-sparse set $S$ in $I_3^d$ with cardinality $N = \lfloor 2^{\mu d} \rfloor$, and by Theorem~\ref{dispersedset}, there is a trifferent code $C$ in $I_3^N$ such that $|C| = \left(  \frac{9}{5}\right)^{N/4}$. 

Using the $\varepsilon$-sparse set $S$ in $I_3^d$  and the trifferent code $C$, let us now construct many sparse sets in $I_3^{d+1}$. Since a word $x \in C$ has length $N$, we may assume that its positions are indexed by $\alpha \in S$. For every $x \in C$, consider the set $S_x$ in $I_3^{d+1}$ such that $S_{x} = \{ (\alpha, x_{\alpha}) : \alpha \in S \}$, $|S_x| = |S|$. Also note that the number of sets $S_x$ is equal to $|C|$.

Let us prove that all sets $S_{x}$ are $\varepsilon/2$-sparse. 
Let  $\beta = (\alpha, x_\alpha)$  and $\beta' =  (\alpha', x_{\alpha'})$, where $\beta, \beta' \in S_x$. Then $\rho(\beta, \beta') \geq \rho(\alpha, \alpha')$. Since  $S$ is an $\varepsilon$-sparse set, we have that $\rho(\alpha, \alpha') \geq \varepsilon d \geq\frac{\varepsilon}{2} (d+1) $, and so $\rho(\beta, \beta') \geq \frac{\varepsilon}{2} (d+1)$.

Next, by the condition on the $\varepsilon$-sparse set $S$, for every $\alpha \in S$ there is $\gamma_{\alpha} \in I_3^d$ such that $T_{\gamma_{\alpha}} \cap S = \{\alpha\}$.  Given  $\beta \in S_x$, $\beta = (\alpha, x_\alpha)$, consider index $\gamma_{\beta} = (\gamma_{\alpha}, j)$ from $I_3^{d+1}$, where $j \in \{ 1,2, 3\}$ is different from $ x_\alpha$. Then the construction of sets $S_x$ implies that $T_{\gamma_\beta} \cap S_{x} = \{ \beta\}$.  Therefore, sets $S_{x}$ are $\varepsilon/2$-sparse for all $x \in C$. 

Using Lemma~\ref{sparsetopoly} and the fact that $N \leq 2^{\varepsilon d/8}$, for all $d \geq 28/\varepsilon$ and  each $x \in C$ there is a $(d+1)$-dimensional polystochastic matrix $A_{x}$ of order $3$ such that $supp(A_x) = I_3^{d+1} \setminus S_x$.  Define a collection of polystochastic matrices $\mathcal{A} = \{ A_x : x \in C\}$.

Since $C$ is a trifferent code,   for all words $x^1, x^2, x^3 \in C$ there is  a position in which all these three words are different. In our construction, this position corresponds to a  line   $\ell$ of direction $d +1$   in $I_3^{d+1}$ such that $supp(A_{x^1}) \cap supp(A_{x^2})  \cap supp(A_{x^3})  \cap  \ell = \emptyset $.  Therefore, there are   no vertices $B$ of the Birkhoff polytope $\Omega_3^{d+1}$  for which  $supp(B) \subseteq supp(A_{x^1}) \cap supp(A_{x^2})  \cap supp(A_{x^3})$.  

Denote by $\mathcal{B}$ the set of all vertices $B$ of the polytope $\Omega_3^{d+1}$ such that $supp (B) \subseteq A_{x}$ for some $A_x \in \mathcal{A}$. The obtained property of the set $\mathcal{A}$ means that  for every $B \in \mathcal{B}$ there are at most two matrices $A_{x}$ such that $supp (B) \subseteq A_{x}$. Therefore, 
$$|\mathcal{B}| \geq \frac{|C|}{2} = \frac{1}{2} \left(  \frac{9}{5}\right)^{N/4} \geq  \frac{1}{2} \left(  \frac{9}{5}\right)^{ \frac{\lfloor 2^{ \mu d}  \rfloor}{4}}  $$
that implies the statement of the theorem.
\end{proof}

\textbf{Remark.} The same reasoning can be applied for the construction of quite rich families of vertices of polytopes $\Omega_n^d$ for any  fixed $n \geq 3$ and large $d$. But for $n \neq 3$ the numbers of such vertices will be much less than the lower bounds on the numbers of multidimensional permutations from Theorem~\ref{nfixedpermbound}.

\section*{Acknowledgments}

We are grateful to Dmitriy Zakharov for the reference to the trifference problem and trifferent codes.

The work of Anna Taranenko  was supported by the Russian Science Foundation under grant No~22-21-00202, https://rscf.ru/project/22-21-00202/.

\begin{bibdiv}
    \begin{biblist}[\normalsize]
    \bibselect{biblio}
    \end{biblist}
    \end{bibdiv}

\end{document}